 \def\LaTeX{\leavevmode L\raise.42ex
   \hbox{\kern-.3em\size{\sf@size}{0pt}\selectfont A}\kern-.15em\TeX}
\newcommand{\BibTeX}{{\rm B\kern-.05em{\sc
i\kern-.025emb}\kern-.08em\TeX}}
\newtheorem{col}{Corollary}[section]
\newtheorem{thm}{Theorem}[section]
\newtheorem{lem}[thm]{Lemma}
\theoremstyle{defn}
\newtheorem{defn}{Definition}
\numberwithin{equation}{section}
\begin{document}

\title[ Weak Weyl's Law on  compact Riemannian manifolds]{Shannon sampling and Weak Weyl's Law on  compact Riemannian manifolds}

\author{Isaac Z. Pesenson}
\address{Department of Mathematics, Temple University,
Philadelphia, PA 19122} \email{pesenson@temple.edu}

\begin{abstract} 
The well known  Weyl's asymptotic formula gives an approximation to the number $\mathcal{N}_{\omega}$  of eigenvalues (counted with multiplicities) on an interval $[0,\>\omega]$ of the Laplace-Beltrami operator  on a compact Riemannian manifold ${\bf M}$.
In this paper we approach this question from the point of view of Shannon-type sampling on compact Riemannian manifolds.
Namely, we give a direct proof that   $\mathcal{N}_{\omega}$ is comparable to cardinality of certain sampling
sets for the subspace of $\omega$-bandlimited functions on ${\bf M}$.

\end{abstract}

\maketitle

\section{Introduction}

\subsection{Objectives}

Spectral geometry  concerned with questions which relate spectral properties of operators acting in function spaces on a Riemannian manifold and the geometry of the underlying manifold.
One of the most famous results of such kind is the Weyl's asymptotic formula for the number of eigenvalues of an elliptic (pseudo-)differential operator on a compact Riemannian manifold. 
The goal of this paper  is  to demonstrate that in the case of a general compact Riemannian manifold the so-called weak Weyl's formula closely relates to cardinality of certain  sampling sets for bandlimited functions. This  fact  was first noticed in \cite{Pes04a}.

	\subsection{Weyl's asymptotic formula on compact Riemannian manifolds} Let  $\mathbf{M}$  be  a compact connected Riemannian manifold without boundary and  $\Delta $ is the Laplace-Beltrami operator. It is given in a local coordinate system  by the formula \cite{Hor}
\begin{equation}\label{Lapl}
\Delta
f=\sum_{m,k}\frac{1}{\sqrt{det(g_{ij})}}\partial_{m}\left(\sqrt{det(g_{ij})}
g^{mk}\partial_{k}f\right)
\end{equation}
where $g_{ij}$ are components of the metric tensor,$\>\>det(g_{ij})$ is the determinant of the matrix $(g_{ij})$, $\>\>g^{mk}$ components of the matrix inverse to $(g_{ij})$. 
 The operator is second-order differential self-adjoint and non-negative  in the space $L_{2}({\bf M})$ constructed with respect to Riemannian measure.
  Domains of the powers
 $\Delta^{s/2}, s\in \mathbf{R},$ coincide with the Sobolev spaces
$H^{s}(\mathbf{M}), s\in \mathbf{R}$. 
Since  $\Delta$ is a second-order differential self-adjoint and non-negative  definite operator  on a compact connected  Riemannian manifold  it has  a discrete spectrum $0=\lambda_{0}<\lambda_{1}\leq \lambda_{2},...$ which goes to infinity  without any accumulation points  and there exists a complete  family  $\{u_{j}\}$  of orthonormal eigenfunctions which form a  basis in $L_{2}(\mathbf{M})$ \cite{Hor}.

We will need the following definitions.

\begin{defn}
The space of $\omega$-bandlimited functions $\mathbf{E}_{\omega}(\Delta)$ is defined as the span of all eigenfunctions of $\Delta$ whose eigenvalues are not greater than $\omega.$ The dimension of the subspace $\mathbf{E}_{\omega}(\Delta)$ will be denoted as $\mathcal{N}_{\omega}$.
\end{defn}

One can easily verify that $f$ belongs to  $\mathbf{E}_{\omega}(\Delta)$ if and only if the following Bernstein type inequality holds 

$$
\|\Delta^{k}f\|_{L_{2}(M)} \leq \omega^{k}\|f\|_{L_{2}(M)} 
$$
for all natural $k$.

	\begin{defn}We say that $M_{\rho}=\{x_{j}\},\>x_{j}\in {\bf M},\>\rho>0,$ is a {\bf metric $\rho$-lattice} if 
	
	\begin{enumerate}
	
	\item  Balls $B(x_{j}, \rho/2)$ are disjoint 
	$$
	B(x_{j},\rho/2)\cap B(x_{i}, \rho/2)=\emptyset,\>\>\>\>\>j\neq i,
	$$ 
	but balls $B(x_{\nu}, \rho)$ form a cover of ${\bf M}$. 
	
	\item  There exists a constant $N_{{\bf M}}$ such that multiplicity of all such covers $\left\{B(x_{j}, \rho)\right\}$ is bounded by $N_{{\bf M}}$.

\end{enumerate}
	\end{defn}
	One can show \cite{Pes00}, \cite{Pes04b} existence of metric lattices for sufficiently small  $\rho>0$. We reprove this fact in Lemma \ref{cover} below.	
Note that   $\mathcal{N}_{\omega}$ is the same as the number of eigenvalues (counting with their multiplicities) which are not greater $\omega$.  
	According to the  Weyl's asymptotic formula \cite{Hor} one has for large $\omega$
\begin{equation}\label{Weyl-1}
\mathcal{N}_{\omega}\sim A \>Vol(\mathbf{\mathbf{M}})\omega^{d/2},
\end{equation}
where $d=dim \>\mathbf{\mathbf{M}}$ and $A$ is a constant which is independent on ${\bf M}$. To reveal meaning of the right-hand side of this formula let's rewrite it in  the following form
\begin{equation}
\mathcal{N}_{\omega}\sim A \>Vol(\mathbf{\mathbf{M}})\omega^{d/2}=A \frac{ Vol({\bf M})}{\left(\omega^{-1/2}\right)^{d}}.
\end{equation}	
Since in the case of a Riemannian manifold ${\bf M}$ of dimension $n$ 
all the balls of the same radius $\rho$ have essentially the same volume $\sim \rho^{d}$  
the last fraction  can be  interpreted as a  number of balls $
	B(x_{\nu},\omega^{-1/2})$ whose centers $\{x_{\nu}\}$ form  a lattice $M_{\omega^{-1/2}}$.

	\bigskip
	
{ \bf  The main goal of our paper is to present a direct proof of  the following Theorem \ref{WWL} (which we call the Weak Weyl's Law) 
 without using the Weyl's asymptotic formula (\ref{Weyl-1}).}

		\begin{thm}(Weak Weyl's Law)  \label{WWL}  In the case of a Riemannian manifold the number
		$
		\mathcal{N}_{\omega}
		$
		 of eigenvalues of $\Delta$ in $[0,\>\omega]$ counting with their multiplicities  is equivalent to a number of points in a metric lattice $M_{\omega^{-1/2}}.$   Namely,  there are   constants $a=a({\bf M})>0$ and 
		$$
		0<\gamma=\gamma({\bf M})<1,
		$$
		 such that for all sufficiently large $\omega$ the following double inequality  holds
		\begin{equation}\label{DI}
		a \>\sup |M_{\omega^{-1/2}}|
		\leq \mathcal{N}_{\omega} \leq \> \inf |M_{\gamma\omega^{-1/2}}|,
		\end{equation}
		where $\sup$  is taken over all  $\omega^{-1/2}$-lattices and $\inf$  is taken over all  $\gamma\omega^{-1/2}$-lattices and $|M_{s}|$ denotes cardinality of  a lattice.
	\end{thm}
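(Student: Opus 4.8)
The plan is to prove the two inequalities in \eqref{DI} by two independent arguments, reflecting the fact that $\mathcal{N}_{\omega}$ and the lattice cardinalities are all comparable to $Vol(\mathbf{M})\,\omega^{d/2}$, but with constants controlled geometrically rather than through \eqref{Weyl-1}. The right-hand inequality is an \emph{upper} bound on $\mathcal{N}_{\omega}$ obtained from an averaging (Shannon-type) sampling inequality at the fine scale $\gamma\omega^{-1/2}$, while the left-hand inequality is a \emph{lower} bound on $\mathcal{N}_{\omega}$ obtained from the variational (min--max) principle using test functions localized at the scale $\omega^{-1/2}$. The only inputs beyond linear algebra are three uniform geometric facts valid on a compact manifold for all sufficiently small radii $r$: the volume comparison $vol\,B(x,r)\asymp r^{d}$, a Poincar\'e inequality on geodesic balls with a radius-independent constant, and the comparison $\mu_{1}^{Dir}\bigl(B(x,r)\bigr)\le C_{\mathbf{M}}\,r^{-2}$ for the first Dirichlet eigenvalue.

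For the left-hand inequality I would argue variationally. Fix $R=c_{0}\,\omega^{-1/2}$ with $c_{0}=\sqrt{C_{\mathbf{M}}}$ and choose a maximal family of disjoint geodesic balls $B(y_{1},R),\dots,B(y_{m},R)$. On each ball take $\psi_{i}$ to be the first Dirichlet eigenfunction (or a transplant of a fixed smooth bump), so that $\|\nabla\psi_{i}\|^{2}\le C_{\mathbf{M}}R^{-2}\|\psi_{i}\|^{2}=\omega\|\psi_{i}\|^{2}$. Since the supports are disjoint, the $\psi_{i}$ are mutually orthogonal both in $L_{2}(\mathbf{M})$ and in the Dirichlet form, so for every $f=\sum_{i}c_{i}\psi_{i}$ one has $\langle\Delta f,f\rangle/\|f\|^{2}\le\max_{i}\|\nabla\psi_{i}\|^{2}/\|\psi_{i}\|^{2}\le\omega$. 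By the min--max principle the existence of this $m$-dimensional subspace, on which the Rayleigh quotient does not exceed $\omega$, guarantees at least $m$ eigenvalues in $[0,\omega]$, that is $\mathcal{N}_{\omega}\ge m$. Maximality makes the balls $B(y_{i},2R)$ cover $\mathbf{M}$, so the volume comparison gives $m\ge c\,Vol(\mathbf{M})\,\omega^{d/2}$; on the other hand disjointness of the balls $B(x_{j},\tfrac12\omega^{-1/2})$ in any $\omega^{-1/2}$-lattice gives $|M_{\omega^{-1/2}}|\le C\,Vol(\mathbf{M})\,\omega^{d/2}$ uniformly over all such lattices. Combining these yields $\mathcal{N}_{\omega}\ge (c/C)\,\sup|M_{\omega^{-1/2}}|$, which is the asserted inequality with $a=c/C$.

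For the right-hand inequality I would set $\rho=\gamma\omega^{-1/2}$ and use the averaging map $f\mapsto(\bar f_{j})_{j}$, $\bar f_{j}=vol(B(x_{j},\rho))^{-1}\int_{B(x_{j},\rho)}f$, attached to any $\rho$-lattice $\{x_{j}\}$. Applying the Poincar\'e inequality on each covering ball, squaring, and summing over the cover, the bounded multiplicity $N_{\mathbf{M}}$ controls the overlap and produces
\begin{equation}
\|f\|^{2}\le 2\sum_{j}vol\bigl(B(x_{j},\rho)\bigr)\,|\bar f_{j}|^{2}+2C^{2}N_{\mathbf{M}}\,\rho^{2}\,\|\nabla f\|^{2}.
\end{equation}
For $f\in\mathbf{E}_{\omega}(\Delta)$ the Bernstein inequality gives $\|\nabla f\|^{2}=\langle\Delta f,f\rangle\le\omega\|f\|^{2}$, so with $\rho^{2}\omega=\gamma^{2}$ and the choice $\gamma=(4C^{2}N_{\mathbf{M}})^{-1/2}\in(0,1)$ the gradient term is absorbed and one obtains the lower sampling bound $\|f\|^{2}\le 4\sum_{j}vol(B(x_{j},\rho))|\bar f_{j}|^{2}$. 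This makes the averaging map injective on $\mathbf{E}_{\omega}(\Delta)$, whence $\mathcal{N}_{\omega}=\dim\mathbf{E}_{\omega}(\Delta)\le|M_{\gamma\omega^{-1/2}}|$; taking the infimum over all $\gamma\omega^{-1/2}$-lattices gives the right-hand inequality.

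The main obstacle is not the linear algebra but securing the geometric constants uniformly. Everything hinges on the three comparisons above holding with constants depending only on $\mathbf{M}$ for every center and every sufficiently small radius; these follow from compactness (bounds on curvature and a positive injectivity radius) together with the existence of lattices from Lemma~\ref{cover}, but they must be invoked carefully so that $c$, $C$, $C_{\mathbf{M}}$, $N_{\mathbf{M}}$ and the resulting $\gamma$ are mutually consistent and $\omega$-independent, and so that $\gamma$ is genuinely less than $1$. A secondary point to handle cleanly is that the two inequalities live at scales $\omega^{-1/2}$ and $\gamma\omega^{-1/2}$ that differ by the fixed factor $\gamma$, which is exactly what the statement of Theorem~\ref{WWL} permits.
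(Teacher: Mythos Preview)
Your argument is correct in both halves, but it follows a genuinely different route from the one in the paper, most strikingly for the lower bound.

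For the upper bound, the paper proves a pointwise Plancherel--Polya inequality: it controls $\|f\|_{L_{2}(\mathbf{M})}$ by the discrete $\ell_{2}$-norm of the \emph{point values} $f(x_{j})$, and to make sense of point evaluation it runs a Taylor expansion in each ball, invokes the Sobolev embedding $H^{k}\hookrightarrow C^{0}$ with $k>d/2$, and then absorbs a term involving $\|\Delta^{k/2}f\|$ via the Bernstein inequality. Your version replaces point values by ball averages and uses only the first-order Poincar\'e inequality together with $\|\nabla f\|^{2}\le\omega\|f\|^{2}$; this is more elementary (no embedding, no higher powers of $\Delta$), but it does not yield the genuine Shannon-type sampling theorem with point samples that the paper obtains as an intermediate result (Theorem~\ref{discineq}).

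For the lower bound the contrast is sharper. The paper never uses min--max; instead it invokes the short-time Gaussian bounds on the heat kernel to prove the on-diagonal spectral-function estimate $\sum_{\lambda_{l}\le s}|u_{l}(x)|^{2}\asymp |B(x,s^{-1})|^{-1}$ (Lemma~\ref{key}), and then integrates over $\mathbf{M}$, using orthonormality of the $u_{l}$ to identify the integral with $\mathcal{N}_{\omega}$. Your variational argument with disjointly supported Dirichlet ground states and $\mu_{1}^{\mathrm{Dir}}(B(x,r))\le C_{\mathbf{M}}r^{-2}$ is entirely self-contained and avoids heat-kernel machinery altogether; the price is that you do not obtain the local (pointwise) Weyl-type estimate that the paper's method delivers along the way. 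Both approaches lead to the same double inequality~\eqref{DI}, so the choice is between heavier analytic input with stronger byproducts (the paper) and a lighter, purely variational and $H^{1}$-level argument (yours).
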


\section{Covering Lemma}

We consider a compact  Riemannian manifold  ${\bf M}, \dim {\bf M}=d,$ with metric tensor
$g$. It is known that the Laplace-Beltrami operator $\Delta$ which is defined in (\ref{Lapl}) is a self-adjoint positive definite
operator in the corresponding space $L_{2}({\bf M})$ constructed  from
$g$. Domains of the powers
 $\Delta^{s/2}, s\in \mathbb{R},$ coincide with the Sobolev spaces
$H^{s}({\bf M}), s\in \mathbb{R}$. To choose norms on spaces $H^{s}({\bf M}),$
we consider a finite cover of ${\bf M}$ by balls
$B(y_{\nu},\sigma)$ where $y_{\nu}\in {\bf M}$ is the center  of the ball and
$\sigma$ is its radius. For a partition of unity
${\varphi_{\nu}}$ subordinate to the family
$\{B(y_{\nu},\sigma)\}$ we introduce Sobolev space $H^{s}({\bf M})$ as the
completion of
$C_{0}^{\infty}({\bf M})$ with respect to the norm
\begin{equation}
\|f\|_{H^{s}({\bf M})}=\left(\sum_{\nu}\|\varphi_{\nu}f\|^{2}
_{H^{s}(B(y_{\nu},\sigma))}\right)
^{1/2}.
\end{equation}
The regularity Theorem for the Laplace-Beltrami operator $\Delta$
states that the norm (1.1) is equivalent to the graph norm
$\|f\|+\|\Delta^{s/2}f\|$.

 The volume of a ball $B(x,\rho)$ will be denoted
by $|B(x,\rho)|.$  Let us note that  in the case of a compact Riemannian manifold of dimension $d$
 there exist constants $a_{1}=a_{1}({\bf M}), a_{2}=a_{2}({\bf M})$ such that for a  ball $B(x, \rho)$ of sufficiently small radius $\rho$ and any center $x\in {\bf M}$ one has 
 \begin{equation}\label{balls}
 a_{1}\rho ^{d}\leq |B(x, \rho)|\leq a_{2}\rho^{d},
\end{equation}
where
$$
 |B(x, \rho)|=\int_{B(x, \rho)}dx,\>\>\>d=dim\>{\bf M}.
$$
The inequality (\ref{balls})  implies 
  the next inequality  with the same $a_{1}$ and $a_{2}$:
 \begin{equation}\label{2balls}
 \frac{a_{1}}{a_{2}}|B(x_{2},\rho)|\leq |B(x_{1}, \rho)|\leq \frac{a_{2}}{a_{1}}|B(x_{2},\rho)|, \>\>\>\rho<r,
 \end{equation}
 where $x_{1}, x_{2}$ are any two points in ${\bf M}$ and  $r$ is the injectivity radius of the manifold. 
Since ${\bf M}$ is compact there exists a constant $c=c({\bf M})$ such that for any $0<\sigma<\lambda<r/2$ the following
inequality holds true
\begin{equation}\label{doubling}
|B(x,\lambda)|\leq\left(\lambda/\sigma\right)^{d}c|B(x,\sigma)|.
\end{equation}
In what follows we will use the notation 
$$
N_{{\bf M}}=\frac{12^{d}c a_{2}}{a_{1}}.
$$
The following Covering Lemma plays an important role for the
paper.

\begin{lem}\label{cover}
If ${\bf M}$ satisfy the above assumptions then for any $0<\rho<r/6$
there
 exists a finite set of points $\{x_{i}\}$ such that

1) balls $B(x_{i}, \rho/4)$ are disjoint,

2) balls $B(x_{i}, \rho/2)$ form a cover of ${\bf M}$,

3) multiplicity of the cover by balls $B(x_{i},\rho)$ is not greater
$N_{{\bf M}}.$

\end{lem}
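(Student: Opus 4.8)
The plan is to construct the point set $\{x_i\}$ by a standard greedy/maximal-packing argument, and then to deduce all three properties from the volume estimates \eqref{balls}, \eqref{2balls} and \eqref{doubling}. First I would take $\{x_i\}$ to be a maximal $\rho/2$-separated subset of ${\bf M}$, i.e. a set of points that is maximal (with respect to inclusion) among all subsets whose pairwise distances are at least $\rho/2$. Such a maximal set exists and is finite because ${\bf M}$ is compact: the balls $B(x_i,\rho/4)$ are pairwise disjoint by the separation condition, each has volume at least $a_1(\rho/4)^d$ by \eqref{balls}, and they all sit inside the finite-volume manifold ${\bf M}$, so there can be at most $\mathrm{Vol}({\bf M})/\bigl(a_1(\rho/4)^d\bigr)$ of them. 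This immediately gives property 1). Property 2) is the maximality: if some $y\in{\bf M}$ were at distance $\geq \rho/2$ from every $x_i$, then $\{x_i\}\cup\{y\}$ would still be $\rho/2$-separated, contradicting maximality; hence every $y$ lies within $\rho/2$ of some $x_i$, i.e. the balls $B(x_i,\rho/2)$ cover ${\bf M}$.

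The substantive part is property 3), the bound on the multiplicity of the cover $\{B(x_i,\rho)\}$. Fix a point $x\in{\bf M}$ and let $I=\{i : x\in B(x_i,\rho)\}$; I must show $|I|\leq N_{{\bf M}}$. For $i\in I$ the ball $B(x_i,\rho/4)$ is contained in $B(x,2\rho)$ (since $d(x,x_i)<\rho$ and $\rho/4<\rho$), and these balls are disjoint, so
\[
\sum_{i\in I} |B(x_i,\rho/4)| \;\leq\; |B(x,2\rho)|.
\]
Now I bound below each $|B(x_i,\rho/4)|$ and bound above $|B(x,2\rho)|$ and compare. By \eqref{2balls}, $|B(x_i,\rho/4)|\geq \tfrac{a_1}{a_2}|B(x,\rho/4)|$; and by the doubling inequality \eqref{doubling} applied with $\sigma=\rho/4$, $\lambda=2\rho$ (note $2\rho<r/2$ since $\rho<r/6<r/4$), we get $|B(x,2\rho)|\leq 8^{d} c\,|B(x,\rho/4)|$. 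Substituting both into the displayed inequality and cancelling $|B(x,\rho/4)|$ yields
\[
|I|\cdot \frac{a_1}{a_2} \;\leq\; 8^d c,
\qquad\text{hence}\qquad
|I| \;\leq\; \frac{8^d c\, a_2}{a_1} \;\leq\; \frac{12^d c\, a_2}{a_1} = N_{{\bf M}},
\]
which is exactly property 3).

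I expect the main (and only real) obstacle to be bookkeeping with the radii and the injectivity-radius constraints: one must check that every ball invoked — $B(x_i,\rho/4)$, $B(x,\rho/4)$, $B(x,2\rho)$ — has radius small enough (below $r$, resp. below $r/2$) for \eqref{balls}, \eqref{2balls} and \eqref{doubling} to apply, and this is precisely why the hypothesis is stated as $\rho<r/6$ rather than merely $\rho<r$. There is also a mild point in the finiteness/maximality step: maximality of a $\rho/2$-separated set can be obtained either by the volume-packing bound above (which already shows any such set is finite, so a maximal one exists by taking a largest one) or by a Zorn's-lemma argument; I would use the former since it is constructive and reuses \eqref{balls}. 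Everything else is a direct substitution into the three displayed volume inequalities, with the constant $N_{{\bf M}}=12^d c a_2/a_1$ chosen generously enough to absorb the factor $8^d$ that actually arises.
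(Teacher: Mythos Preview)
Your proposal is correct and follows essentially the same maximal-packing/volume-counting argument as the paper: choose a maximal $\rho/2$-separated set (equivalently, a maximal disjoint family of $\rho/4$-balls), deduce the covering property from maximality, and bound the multiplicity by comparing the total volume of the disjoint $\rho/4$-balls to that of a single large ball containing them. The only cosmetic difference is that the paper bounds the \emph{intersection} multiplicity (the number of $B(x_i,\rho)$ meeting a fixed $B(x_j,\rho)$), which forces the containing ball to have radius $3\rho$ and produces the factor $12^d$, whereas you bound the \emph{pointwise} multiplicity and can take the containing ball of radius $2\rho$, yielding the sharper $8^d$; this also explains why the hypothesis $\rho<r/6$ (needed so that $3\rho<r/2$ in \eqref{doubling}) is slightly stronger than what your variant actually requires.
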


\begin{proof}

Let us choose a family of disjoint balls $B(x_{i},\rho/4)$ such
that there is no ball $B(x,\rho/4), x\in {\bf M},$ which has empty intersections
with all balls from our family. Then the family $B(x_{i},\rho/2)$ is a
cover of
${\bf M}$. Every ball from the family $\{B(x_{i}, \rho)\}$, that has
non-empty intersection with a particular ball $\{B(x_{j}, \rho)\}$ is
contained in the ball $\{B(x_{j}, 3\rho)\}$. Since any two balls from the
family $B(x_{i},\rho/4)$
are disjoint, it gives the following estimate for the index of
multiplicity
$N$ of the cover $B(x_{i},\rho)$:
\begin{equation}
N\leq\frac{\sup_{y\in {\bf M}}|B(y,3\rho)|}{\inf_{x\in {\bf M}}|B(x,\rho/4)|}.
\end{equation}
From here, according to (\ref{doubling}) we obtain

$$N\leq\frac{\sup_{y\in {\bf M}}|B(y,3\rho)|}{\inf_{x\in {\bf M}}|B(x,\rho/4)|}\leq 12^{d}c
\frac{\sup_{y\in {\bf M}}|B(y,\rho/4)|}{\inf_{x\in {\bf M}}|B(x,\rho/4)|}
\leq \frac{12^{d}c a_{2}}{a_{1}} =N_{{\bf M}}.
$$

\end{proof}

\section{Sampling sets for bandlimited functions and the upper estimate on the number of eigenvalues. }

\subsection{Poincare-type inequality on manifolds}

One can prove the following Poincare type inequality  (see \cite{Pes00}, \cite{Pes04b}). We sketch it's proof for completeness.

\begin{thm}
There exists a constant $C=C({\bf M},  k)$ such that if $\rho>0$ is sufficiently small then for all $\rho$ lattices $M_{\rho}=\{x_{j}\}$ and all $f\in H^{k}({\bf M}),\>\>\>k>d/2, \>\>\>d=\dim {\bf M},$
\begin{equation}\label{POINC}
\|f\|_{L_{2}({\bf M})} \leq 
C({\bf M},  k)\left\{\rho^{d/2}\left(\sum_{x_{j}\in M_{\rho}}|f(x_{j})|^{2}\right)^{1/2}+\rho^{k}\|\Delta^{k/2}f\|_{L_{2}(M)} \right\}.
\end{equation}
\end{thm}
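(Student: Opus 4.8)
The plan is to establish the Poincaré-type inequality (\ref{POINC}) by a local-to-global argument built on the covering Lemma \ref{cover}. First I would fix a $\rho$-lattice $M_{\rho}=\{x_j\}$ and recall that, by the definition of a metric lattice, the balls $B(x_j,\rho)$ cover ${\bf M}$ with multiplicity bounded by $N_{{\bf M}}$. On each such ball $B(x_j,\rho)$ I want a \emph{local} Poincaré inequality of the form
$$
\|f\|_{L_2(B(x_j,\rho))}^2 \leq C\Bigl(\rho^{d}|f(x_j)|^{2} + \rho^{2k}\|f\|_{H^{k}(B(x_j,\rho))}^{2}\Bigr),
$$
valid for $f\in H^{k}$ with $k>d/2$ (so that point evaluation makes sense by Sobolev embedding). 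The standard route to this is a scaling/compactness argument: pull back to a fixed reference ball $B(0,1)\subset \mathbb{R}^{d}$ via normal coordinates, rescale by $\rho$, and on the unit ball use the fact that $g(v)=v-v(0)$ has $L_2$-norm controlled by $\|g\|_{H^{k}}$ together with a contradiction (Rellich–Kondrachov) argument to absorb the constant; tracking how the Sobolev norms transform under the dilation $x\mapsto \rho x$ produces exactly the powers $\rho^{d}$ and $\rho^{2k}$ above, using that the metric on a small ball is uniformly comparable to the Euclidean one (cf. (\ref{balls}), (\ref{2balls})).

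Next I would sum the local inequalities over the lattice. Summing the left-hand sides gives at least $\|f\|_{L_2({\bf M})}^2$ since the balls cover ${\bf M}$. Summing the right-hand sides, the first terms add up to $C\rho^{d}\sum_j|f(x_j)|^{2}$, while the second terms give $C\rho^{2k}\sum_j\|f\|_{H^{k}(B(x_j,\rho))}^{2}$, and here the bounded-multiplicity property (item 3 of Lemma \ref{cover}, or item 2 of the lattice definition) lets me bound $\sum_j\|f\|_{H^{k}(B(x_j,\rho))}^{2}\leq N_{{\bf M}}\|f\|_{H^{k}({\bf M})}^{2}$. Combining, and taking square roots, yields
$$
\|f\|_{L_2({\bf M})} \leq C\Bigl(\rho^{d/2}\bigl(\textstyle\sum_j|f(x_j)|^{2}\bigr)^{1/2} + \rho^{k}\|f\|_{H^{k}({\bf M})}\Bigr).
$$
Finally, to replace the full Sobolev norm $\|f\|_{H^{k}({\bf M})}$ by $\|\Delta^{k/2}f\|_{L_2({\bf M})}$, I invoke the regularity theorem quoted in the text: $\|f\|_{H^{k}}$ is equivalent to $\|f\|_{L_2}+\|\Delta^{k/2}f\|_{L_2}$. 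The leftover term $\rho^{k}\|f\|_{L_2({\bf M})}$ on the right can then be absorbed into the left-hand side for $\rho$ sufficiently small (so that $C\rho^{k}<1/2$), which is harmless since the statement only claims the inequality for small $\rho$.

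I expect the main obstacle to be making the local Poincaré inequality genuinely uniform in the center $x_j$ with the correct scaling in $\rho$. One must ensure that the constant in the unit-ball inequality does not depend on which point of ${\bf M}$ one expanded around — this follows from compactness of ${\bf M}$ and continuity of the metric coefficients in normal coordinates, but it needs to be stated carefully — and one must carefully track the homogeneity of each Sobolev seminorm under the rescaling $x\mapsto\rho x$ to land on $\rho^{d/2}$ in front of the sampling term and $\rho^{k}$ in front of the derivative term. The bounded-overlap bookkeeping and the absorption step are routine by comparison.
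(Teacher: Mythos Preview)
Your local Poincar\'e inequality
\[
\|f\|_{L_2(B(x_j,\rho))}^2 \leq C\Bigl(\rho^{d}|f(x_j)|^{2} + \rho^{2k}\|f\|_{H^{k}(B(x_j,\rho))}^{2}\Bigr)
\]
is false for $k\ge 2$, and this is where the plan breaks. Take $f(x)=x_1-(x_j)_1$ in local coordinates: then $f(x_j)=0$, all derivatives of order $\ge 2$ vanish, $\|f\|_{L_2(B_\rho)}^2\sim\rho^{d+2}$, while $\|f\|_{H^k(B_\rho)}^2\sim\rho^{d}$, so the right-hand side is $\sim\rho^{2k+d}\ll\rho^{d+2}$ as $\rho\to 0$. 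The Rellich--Kondrachov argument you sketch cannot rescue this: on the unit ball, the would-be inequality $\|g\|_{L_2}\le C(|g(0)|+|g|_{\dot H^k})$ (which is what you need for the scaling to produce $\rho^{2k}$) fails because any nonconstant polynomial of degree $<k$ vanishing at the origin has $|g|_{\dot H^k}=0$ and $g(0)=0$ but $\|g\|_{L_2}>0$; a single point value simply does not control the $(\dim\mathcal P_{k-1}-1)$-dimensional space of such polynomials.

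What actually works, and what the paper does, is to keep \emph{all} intermediate derivative scales in the local estimate. Via a Taylor expansion at $x_j$ combined with scaled Sobolev embedding for the intermediate Taylor coefficients $\partial^\alpha f(x_j)$, one obtains on each ball a bound of the form $\|f\|_{L_2(B_\rho)}^2\le C\bigl(\rho^d|f(x_j)|^2+\sum_{j=1}^{k}\rho^{2j}|f|_{H^j(B_\rho)}^2\bigr)$. After summing and invoking elliptic regularity this leaves terms $\rho^{j}\|\Delta^{j/2}f\|$ for $1\le j\le k$ on the right; these are then collapsed to the single top-order term $\rho^{k}\|\Delta^{k/2}f\|$ plus an absorbable $\|f\|$ by the spectral interpolation inequality $\rho^{j}\|\Delta^{j/2}f\|\le a^{k-j}\rho^{k}\|\Delta^{k/2}f\|+c_k a^{-j}\|f\|$ with a suitable choice of $a>0$. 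Your absorption step at the end is fine, but you need this intermediate-order bookkeeping and the interpolation step before you can get there.
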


\begin{proof}

Let $M_{\rho}=\{x_{i}\}$ be a $\rho$-admissible set
and $\{\varphi_{\nu}\}$ the partition of unity from (1.1). For any $f\in
C^{\infty}(M)$, every fixed $B(x_{i},\rho)$ and every
 $x\in B(x_{i},\rho/2)$
$$
(\varphi_{\nu}f)(x)=(\varphi_{\nu}f)(x_{i})+\sum_{1\leq|\alpha|\leq n-1}
\frac{1}{\alpha !}\partial^{\alpha}(\varphi_{\nu}f)(x_{i})(x-x_{i})
^{\alpha}+
$$
\begin{equation}\label{Taylor}
\sum_{|\alpha|=n}\frac{1}{(n-1)!}\int_{0}^{\tau}t^{n-1}\partial
^{\alpha}(\varphi_{\nu}f)(x_{i}+t\vartheta)\vartheta^{\alpha}dt,
\end{equation}
where $x=(x_{1},...,x_{d}), x_{i}=(x_{1}^{i},...,x_{d}^{i}), \alpha=(
\alpha_{1},...,\alpha_{d}), x-x_{i}=(x_{1}-x_{1}^{i})^{\alpha_{1}}...
(x_{d}-x_{d}^{i})^{\alpha_{d}}, \tau=\|x-x_{i}\|,
\vartheta=(x-x_{i})/ \tau.$
By using the Sobolev embedding Theorem one can prove  the following inequality
\begin{equation}
|\partial^{\alpha}(\varphi_{\nu}f)(x_{i})|\leq C_{{\bf M},m}\sum_{|\mu|
\leq m}
\rho^{|\mu+\alpha|-d/2}\|\partial^{\mu+\alpha}(\varphi_{\nu}f)\|
_{L_{2}(B(x_{i},\rho))},
\end{equation}
where $\mu=(\mu_{1}, \mu_{2}, ... ,\mu_{d}), m>d/2.$
It allows the following  estimation of the second term in (\ref{Taylor}).
$$
\int_{B(x_{i},\rho/2)}\left|\sum_{1\leq|\alpha|\leq n-1}
\frac{1}{\alpha !}\partial^{\alpha}(\varphi_{\nu}f)(x_{i})(x-x_{i})
^{\alpha}\right|^{2}dx\leq
$$
$$
C_{{\bf M},n}\sum_{|\gamma|\leq n+m-1}   \rho^{2|\gamma|}
\|\partial^{\gamma}(\varphi_{\nu}f)\|^{2}_{L_{2}(B(x_{i},\rho))}.
$$
Next, to estimate the third term in (\ref{Taylor})
 we use the Schwartz inequality and the assumption $n>d/2$
$$
\left |\int_{0}^{\tau}t^{n-1}\partial
^{\alpha}(\varphi_{\nu}f)(x_{i}+t\vartheta)\vartheta^{\alpha}dt\right|^{2}
\leq
$$
$$
\left(\int_{0}^{\tau}t^{n-d/2-1/2}\left |t^{d/2-1/2}\partial
^{\alpha}(\varphi_{\nu}f)(x_{i}+t\vartheta)\right|dt\right)^{2}\leq
$$
$$
C_{{\bf M},n}\tau^{2n-d}\int_{0}^{\tau}t^{d-1}\left|\partial
^{\alpha}(\varphi_{\nu}f)(x_{i}+t\vartheta)\right|^{2}dt.
$$
We integrate both sides of this inequality over the ball
$B(x_{i},\rho/2)$ using the spherical coordinate system
$(\tau,\vartheta).$
$$
\int_{0}^{\rho/2}\tau^{d-1}\int_{|\vartheta |=1}
\left|\int_{0}^{\tau}t^{n-1}\partial
^{\alpha}(\varphi_{\nu}f)(x_{i}+t\vartheta)\vartheta^{\alpha}dt\right|^{2}d\vartheta
d\tau\leq
$$
$$C_{{\bf M},n}\int_{0}^{\rho/2}t^{d-1}\left(\int_{|\vartheta |=1}\int_{0}^{\rho/2}
\tau^{2n-d}\left|\partial^{\alpha}(\varphi_{\nu}f)(x_{i}+t\vartheta)\right|^{2}
\tau^{d-1}d\tau
d\vartheta\right)dt\leq
$$
$$
C_{{\bf M},n}\rho^{2n}\|\partial^{\alpha}
(\varphi_{\nu}f)\|^{2}_{L_{2}(B(x_{i},\rho))},\>\>\>\tau=\|x-x_{i}\|\leq\rho/2, |\alpha|=n.
$$
Next, for  $n>d/2$ and $k=n+m-1$,
$$\|\varphi_{\nu}f\|^{2}_{L_{2}(B(x_{i},\rho/2))}\leq
C_{1}({\bf M},k)\left(\rho^{d}|f(x_{i}|^{2}+
\sum_{j=1}^{k}\sum_{1\leq|\alpha|\leq j}\rho^{2|\alpha|}
\|\partial^{\alpha}
(\varphi_{\nu}f)\|^{2}_{L_{2}(B(x_{i},\rho))}\right),
$$
where $k>d-1$ since $n>d/2$ and $m>d/2.$
Since balls $B(x_{i},\rho/2)$ cover the manifold and the cover by
$B(x_{i},\rho)$
has a finite multiplicity $\leq N_{{\bf M}}$ the
summation over all balls gives
$$
\|f\|^{2}_{L_{2}({\bf M})}\leq C_{2}({\bf M},k)\left\{\rho^{d}\left(\sum_{i=1}^{\infty}
|f(x_{i})|^{2}\right)+\sum_{j=1}^{k}\rho^{2j}\|f\|^{2}_{H^{j}({\bf M})}
\right\}, k>d-1.
$$
Using this inequality and the regularity theorem for Laplace-Beltrami
operator we obtain
$$
\|f\|_{L_{2}({\bf M})}\leq$$
$$
C_{3}({\bf M},k)\left\{\rho^{d/2}\left(\sum_{i=1}^{\infty}
|f(x_{i})|^{2}\right)^{1/2}+\sum_{j=1}^{k}\rho^{j}\left(\|f\|+
\|\Delta^{j/2}f\|\right)
\right\}, k>d-1.
$$
For the self-adjoint $\Delta$ for any $a>0,\rho>0, 0\leq j\leq k$ we have
the following interpolation inequality
$$
\rho^{j}\|\Delta^{j/2}f\|\leq a^{2k-j}\rho^{2k}\|\Delta^{k}f\|
+c_{k}a^{-j}\|f\|.
$$
Because in the last inequality we are free to choose any $a>0$ we are
coming to
our main claim.
\end{proof}

\subsection{Sampling sets for bandlimited functions and the upper estimate on the number of eigenvalues. }

Now, if a bandlimited function $f $ belongs to $\mathbf{E}_{\omega}(\Delta)$ the  Bernstein inequality implies  
$$
\rho^{k}\|\Delta^{k/2}f\|_{L_{2}({\bf M})}\leq \left(\rho\omega^{1/2}\right)^{k}\|f\|_{L_{2}({\bf M})}.
$$ 
If  $C({\bf M}, k)$ is the same as in (\ref{POINC}) and we pick a such $\rho$ for which
$$
\rho =\gamma \omega^{-1/2},\>\>\>\gamma=\gamma({\bf M})=\frac{(C({\bf M}, k))^{1/k}}{2}<1,
$$
we can move the second term on the right side in (\ref{POINC}) to the left to obtain following Plancherel-Polya-type inequality  which shows that in the spaces of bandlimited functions $\mathbf{E}_{\omega}(\Delta)$ the regular  $L_{2}({\bf M})$ norm is controlled by  a discrete one (in fact, they are equivalent).

\begin{thm}\label{discineq}
There exists a   $0<\gamma=\gamma({\bf M})<1$  and there exists a constant  $C_{1}=C_{1}({\bf M})$
 such that for
any $\omega>0$, every metric $\rho$-lattice $M_{\rho}=\{x_{j}\}$ with
$\rho= \gamma\omega^{-1/2}$ the following inequality  holds true

\begin{equation}
\|f\|_{L_{2}({\bf M})} \leq C_{1}\rho^{d/2}\left(\sum_{x_{j}\in M_{\rho}}
|f(x_{j})|^{2}\right)^{1/2}\label{PP1}
\end{equation}
for all $f\in E_{\omega}(\Delta),\>\>\>d=dim\>\mathbf{M}$. \label{PP}
\end{thm}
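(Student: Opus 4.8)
The plan is to derive Theorem \ref{discineq} as an essentially immediate corollary of the Poincaré-type inequality \eqref{POINC}, by exploiting the Bernstein inequality available for bandlimited functions. First I would fix $k>d/2$ once and for all and let $C=C({\bf M},k)$ be the constant appearing in \eqref{POINC}. The key observation is that if $f\in\mathbf{E}_{\omega}(\Delta)$ then the Bernstein inequality (stated right after the definition of $\mathbf{E}_{\omega}(\Delta)$) gives $\|\Delta^{k/2}f\|_{L_{2}({\bf M})}\leq\omega^{k/2}\|f\|_{L_{2}({\bf M})}$, so the second term on the right-hand side of \eqref{POINC} is bounded by $C(\rho\omega^{1/2})^{k}\|f\|_{L_{2}({\bf M})}$.

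Next I would make the choice of $\rho$ explicit. Setting $\rho=\gamma\omega^{-1/2}$ with $\gamma=\gamma({\bf M})=\tfrac{1}{2}(C({\bf M},k))^{1/k}$ one gets $C(\rho\omega^{1/2})^{k}=C\gamma^{k}=C\cdot\tfrac{1}{2^{k}}\cdot C^{-1}=2^{-k}\leq\tfrac12$. Hence \eqref{POINC} reads
\begin{equation*}
\|f\|_{L_{2}({\bf M})}\leq C\rho^{d/2}\Bigl(\sum_{x_{j}\in M_{\rho}}|f(x_{j})|^{2}\Bigr)^{1/2}+\tfrac12\|f\|_{L_{2}({\bf M})},
\end{equation*}
and subtracting $\tfrac12\|f\|_{L_{2}({\bf M})}$ from both sides and multiplying by $2$ yields \eqref{PP1} with $C_{1}=C_{1}({\bf M})=2C({\bf M},k)$. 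One should also note that $\gamma<1$: this is not automatic from the formula, but for sufficiently large $\omega$ (equivalently, once $\rho=\gamma\omega^{-1/2}$ is small enough that the Poincaré inequality and the covering lemma apply) we may always enlarge $C({\bf M},k)$ if necessary so that $C({\bf M},k)\ge 1$, while shrinking $\gamma$ only strengthens the estimate; alternatively one absorbs a harmless constant so that $\gamma=\tfrac12 C^{1/k}$ can be assumed $<1$.

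I do not expect a genuine obstacle here — the whole content is packaged in \eqref{POINC} and in the Bernstein inequality, and the argument is the standard "absorb the error term" trick from Plancherel–Pólya theory. The only points requiring a word of care are: (i) making sure $\rho=\gamma\omega^{-1/2}$ is small enough that Lemma \ref{cover} and the Poincaré inequality are in force, which is exactly why the statement restricts to metric $\rho$-lattices with this specific $\rho$ (and implicitly to $\omega$ large, or $\rho<r/6$); and (ii) checking that $\gamma$ can be taken strictly less than $1$, which is a matter of normalizing the constant $C({\bf M},k)$. Since the sum over $M_{\rho}$ in \eqref{POINC} was originally written as an infinite sum but $M_{\rho}$ is finite (by compactness of ${\bf M}$ and disjointness of the balls $B(x_{j},\rho/2)$), the passage to the finite sum $\sum_{x_{j}\in M_{\rho}}$ is automatic. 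This completes the proof.
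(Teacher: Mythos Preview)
Your proposal is correct and follows essentially the same route as the paper: apply the Bernstein inequality to control the $\rho^{k}\|\Delta^{k/2}f\|$ term in \eqref{POINC}, choose $\rho=\gamma\omega^{-1/2}$ with $\gamma$ tied to $C({\bf M},k)$, and absorb the resulting term into the left-hand side. (Note the small slip, shared with the paper, in the formula for $\gamma$: your subsequent computation $C\gamma^{k}=C\cdot 2^{-k}\cdot C^{-1}$ shows you mean $\gamma=\tfrac12\,C({\bf M},k)^{-1/k}$, not $C({\bf M},k)^{1/k}$.)
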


	\begin{col} 
	There exists a   $0<\gamma=\gamma({\bf M})<1$  such that  for every $\omega>0$ and every metric $\rho$-lattice $M_{\rho}=\{x_{j}\}$ with
$\rho= \gamma\omega^{-1/2}$ the set $M_{\rho}=\{x_{j}\}$ is a sampling set for the space ${\bf E}_{\omega}(\Delta)$.  
	\end{col}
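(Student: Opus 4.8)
The plan is to read the statement off Theorem~\ref{PP} together with one complementary estimate, since the substantive analytic work has already been done. Recall that $M_{\rho}=\{x_{j}\}$ is a sampling set for ${\bf E}_{\omega}(\Delta)$ precisely when there exist constants $0<c_{1}\le c_{2}$ with
$$
c_{1}\|f\|_{L_{2}({\bf M})}^{2}\le \rho^{d}\sum_{x_{j}\in M_{\rho}}|f(x_{j})|^{2}\le c_{2}\|f\|_{L_{2}({\bf M})}^{2}
$$
for every $f\in {\bf E}_{\omega}(\Delta)$; in that case the sampling map $f\mapsto\{f(x_{j})\}$ is injective and $f$ is stably reconstructed from its values. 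Fix $\gamma=\gamma({\bf M})<1$ exactly as in Theorem~\ref{PP} and put $\rho=\gamma\omega^{-1/2}$. The left-hand inequality is then literally the Plancherel--Polya inequality (\ref{PP1}), with $c_{1}=C_{1}^{-2}$, so nothing further is needed for the lower sampling bound.

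For the upper bound I would argue locally, in the same spirit as the proof of the Poincar\'e inequality (\ref{POINC}). Using a fixed partition of unity $\{\varphi_{\nu}\}$ subordinate to a finite cover of ${\bf M}$ and the local Sobolev embedding $H^{k}\hookrightarrow C$ on the unit ball for $k>d/2$, one rescales to each ball $B(x_{j},\rho)$ to get a pointwise estimate of the form
$$
|f(x_{j})|^{2}\le C({\bf M},k)\,\rho^{-d}\Big(\|f\|_{L_{2}(B(x_{j},\rho))}^{2}+\rho^{2k}\|\Delta^{k/2}f\|_{L_{2}(B(x_{j},\rho))}^{2}\Big),
$$
where passing from the localized $H^{k}$ norm to $\|f\|_{L_{2}}+\rho^{k}\|\Delta^{k/2}f\|_{L_{2}}$ again invokes the regularity theorem for $\Delta$. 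Multiplying by $\rho^{d}$, summing over $x_{j}\in M_{\rho}$, and using that the cover $\{B(x_{j},\rho)\}$ has multiplicity at most $N_{{\bf M}}$ by Lemma~\ref{cover}, the ball integrals add up to at most $N_{{\bf M}}$ copies of the global norms, so
$$
\rho^{d}\sum_{x_{j}\in M_{\rho}}|f(x_{j})|^{2}\le C({\bf M},k)\,N_{{\bf M}}\Big(\|f\|_{L_{2}({\bf M})}^{2}+\rho^{2k}\|\Delta^{k/2}f\|_{L_{2}({\bf M})}^{2}\Big).
$$

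Finally I would insert the Bernstein inequality $\|\Delta^{k/2}f\|_{L_{2}}\le\omega^{k/2}\|f\|_{L_{2}}$ for $f\in{\bf E}_{\omega}(\Delta)$ and the choice $\rho=\gamma\omega^{-1/2}$, which makes $\rho^{2k}\|\Delta^{k/2}f\|_{L_{2}}^{2}\le\gamma^{2k}\|f\|_{L_{2}}^{2}\le\|f\|_{L_{2}}^{2}$; hence $\rho^{d}\sum_{x_{j}}|f(x_{j})|^{2}\le 2C({\bf M},k)N_{{\bf M}}\|f\|_{L_{2}({\bf M})}^{2}$, the desired upper sampling bound with $c_{2}=2C({\bf M},k)N_{{\bf M}}$. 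Together with (\ref{PP1}) this gives the equivalence of the two norms, i.e. $M_{\rho}$ is a sampling set, which is the corollary. I do not expect a serious obstacle here, since everything rests on Theorem~\ref{PP}; the only point requiring care is making the local pointwise/Sobolev estimate uniform in the center $x_{j}$ with explicit $\rho$-dependence, which is exactly where the uniform comparability of ball volumes (\ref{balls})--(\ref{2balls}) and the finiteness of the cover are used, and this bookkeeping already appears inside the proof of Theorem~\ref{PP}.
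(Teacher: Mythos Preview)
Your argument is correct and in fact does more than the paper. The paper gives no separate proof for this corollary: it is stated immediately after Theorem~\ref{PP} and treated as a direct consequence of the Plancherel--Polya inequality~(\ref{PP1}) alone, which already forces injectivity of the sampling map $f\mapsto\{f(x_{j})\}$ on the finite-dimensional space ${\bf E}_{\omega}(\Delta)$ and hence stable reconstruction. The complementary upper bound is only alluded to parenthetically (``in fact, they are equivalent'') just before Theorem~\ref{discineq} and is never proved in the paper.

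You supply that upper bound explicitly via the scaled local Sobolev embedding, the finite multiplicity of the cover, and the Bernstein inequality --- the natural mechanism, using only tools already in the paper. One small cosmetic point: the displayed local estimate is cleaner with the localized $H^{k}$-seminorm $\sum_{|\alpha|\le k}\rho^{2|\alpha|}\|\partial^{\alpha}(\varphi_{\nu}f)\|_{L_{2}(B(x_{j},\rho))}^{2}$ (as in the paper's estimate for $|\partial^{\alpha}(\varphi_{\nu}f)(x_{i})|$) rather than with $\|\Delta^{k/2}f\|_{L_{2}(B(x_{j},\rho))}$, which is not a local quantity; the passage to $\|\Delta^{k/2}f\|_{L_{2}({\bf M})}$ via the regularity theorem should occur only after summing over the balls, exactly as you indicate in words.
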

	In other words,   every function $f\in {\bf E}_{\omega}(\Delta)$ is uniquely determined by its values $\{f(x_{j})\}$ and can be reconstructed from this set of values in a stable way.

	\textit{Since dimension $\mathcal{N}_{\omega}$ of the space $ {\bf E}_{\omega}(\Delta)$ cannot be bigger than cardinality of a sampling set for this space we obtain the following statement}.

	\begin{col}
	There exists a   $0<\gamma=\gamma({\bf M})<1$   such that for any $\omega>0$ 
	\begin{equation}\label{upper}
	\mathcal{N}_{\omega}\leq \>\inf \left|M_{\gamma\omega^{-1/2}}\right|,
	\end{equation}
	where $\left|M_{\gamma\omega^{-1/2}}\right|$ is the number of points in a lattice $M_{\gamma\omega^{-1/2}}$ and $\inf$ is taken over all such lattices.
	\end{col}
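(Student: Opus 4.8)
The plan is to obtain (\ref{upper}) as an immediate consequence of the preceding Corollary, using nothing more than the elementary fact that an injective linear map cannot increase dimension. First I would fix $\gamma=\gamma({\bf M})\in(0,1)$ and the constant $C_{1}=C_{1}({\bf M})$ as in Theorem \ref{discineq}, take an arbitrary $\omega>0$, put $\rho=\gamma\omega^{-1/2}$, and let $M_{\rho}=\{x_{j}\}$ be any metric $\rho$-lattice. Such lattices exist for the relevant range of $\rho$ (Lemma \ref{cover}), and each is a \emph{finite} set of points because ${\bf M}$ is compact; write $|M_{\rho}|$ for its cardinality.

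Next I would introduce the sampling (restriction) operator $S:{\bf E}_{\omega}(\Delta)\to\mathbb{C}^{|M_{\rho}|}$, $Sf=\bigl(f(x_{j})\bigr)_{x_{j}\in M_{\rho}}$. This is a well-defined linear map: every $f\in{\bf E}_{\omega}(\Delta)$ is a finite linear combination of (smooth) eigenfunctions, so pointwise evaluation is legitimate. The Plancherel-Polya-type inequality (\ref{PP1}) shows that $S$ is injective, since $Sf=0$ makes the right-hand side of (\ref{PP1}) vanish and hence forces $\|f\|_{L_{2}({\bf M})}=0$, i.e.\ $f=0$. As $S$ is an injective linear map from the finite-dimensional space ${\bf E}_{\omega}(\Delta)$ into a space of dimension $|M_{\rho}|$, we conclude $\mathcal{N}_{\omega}=\dim{\bf E}_{\omega}(\Delta)\leq|M_{\rho}|$.

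Finally, since the inequality $\mathcal{N}_{\omega}\leq|M_{\rho}|$ holds for \emph{every} metric $\gamma\omega^{-1/2}$-lattice, I would take the infimum over all such lattices to get $\mathcal{N}_{\omega}\leq\inf|M_{\gamma\omega^{-1/2}}|$, which is (\ref{upper}). There is no real analytic obstacle here: all the substance is already contained in Theorem \ref{discineq}. The only points deserving a line of justification are the finiteness of a metric lattice on the compact manifold ${\bf M}$ and the admissibility of pointwise evaluation on ${\bf E}_{\omega}(\Delta)$ (both immediate in the present setting); if one preferred, one could equally well let $S$ take values in $\ell^{2}(M_{\rho})$ and invoke the rank-nullity theorem on the finite-dimensional domain ${\bf E}_{\omega}(\Delta)$.
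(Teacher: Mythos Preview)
Your proposal is correct and follows exactly the paper's approach: the paper's entire argument is the one-sentence remark preceding the corollary, namely that the dimension $\mathcal{N}_{\omega}$ of ${\bf E}_{\omega}(\Delta)$ cannot exceed the cardinality of a sampling set, and you have simply unpacked this by exhibiting the injective sampling operator $S$ and invoking elementary linear algebra. The only difference is the level of detail, not the idea.
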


\section{The lower estimate}

\subsection{Kernels on compact Riemannian manifolds}

Let $\sqrt{\Delta}$ be the positive square root of a second order differential  elliptic self-adjoint nonnegative operator $\Delta$ in $L_{2}({\bf M})$.
For any measurable  bounded function $F(\lambda),\>\>\lambda\in (-\infty, \infty)$ and any $t>0$ one defines a bounded operator $F(t\sqrt{\Delta})$ by the formula
\begin{equation}\label{func-calc-2}
F(t\sqrt{\Delta})f(x)=\int_{\mathbf{M}}K^{F}_{t}(x,y)f(y)dy=\left<K^{F}_{t}(x,\cdot),f(\cdot)\right>,
\end{equation}
where $f\in L_{2}(\mathbf{M})$ and 
\begin{equation}\label{KERN}
K^{F}_{t}(x,y)=\sum_{l=0}^{\infty} F(t\sqrt{\lambda_l})u_l(x)\overline{u_l(y)} = K^{F}_t(y,x).
\end{equation}
The function $K^{F}_t$ is known as the kernel of  the operator $F(t\sqrt{\Delta})$.

We will need the following lemma.
\begin{lem}\label{order}
If  $0\leq F_{1}\leq F_{2}$ and both of them are bounded and have sufficiently fast decay at infinity then $~K^{F_{1}}_{t}(x,x)\leq K^{F_{2}}_{t}(x,x)~$ for any $x\in {\bf M}$ and $t>0$. 

\end{lem}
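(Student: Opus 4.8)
The plan is to reduce the statement to a termwise comparison of the series defining the kernel on the diagonal. Setting $y=x$ in \eqref{KERN} gives, for $i=1,2$,
\begin{equation}
K^{F_i}_{t}(x,x)=\sum_{l=0}^{\infty} F_i(t\sqrt{\lambda_l})\,|u_l(x)|^{2}.
\end{equation}
Since $F_i\geq 0$ and $|u_l(x)|^{2}\geq 0$, each of these is a series of nonnegative terms, so it either converges or diverges to $+\infty$; the hypothesis that $F_1,F_2$ are bounded and have sufficiently fast decay at infinity is exactly what guarantees (together with the standard polynomial bounds on $\lambda_l$ and on $\|u_l\|_{L_\infty({\bf M})}$) that both series converge and represent the continuous kernel of $F_i(t\sqrt{\Delta})$ pointwise. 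I would state this convergence first, so that the subsequent manipulations of the series are legitimate.

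Next I would carry out the comparison. Because $\Delta$ is nonnegative, $\lambda_l\geq 0$ for every $l$, and for any fixed $t>0$ we have $t\sqrt{\lambda_l}\geq 0$. The assumption $0\leq F_1\leq F_2$ on $[0,\infty)$ then yields
\begin{equation}
0\leq F_1(t\sqrt{\lambda_l})\leq F_2(t\sqrt{\lambda_l})\qquad\text{for all }l\geq 0,\ t>0.
\end{equation}
Multiplying by $|u_l(x)|^{2}\geq 0$ and summing over $l$ — which is permitted since all terms are nonnegative — gives
\begin{equation}
K^{F_1}_{t}(x,x)=\sum_{l=0}^{\infty} F_1(t\sqrt{\lambda_l})\,|u_l(x)|^{2}
\leq \sum_{l=0}^{\infty} F_2(t\sqrt{\lambda_l})\,|u_l(x)|^{2}=K^{F_2}_{t}(x,x),
\end{equation}
for every $x\in{\bf M}$ and every $t>0$, which is the claim.

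\textbf{Main obstacle.} There is essentially no hard step: once the diagonal kernel is written as the nonnegative series above, the inequality is immediate. The only point requiring care is the justification that the series in \eqref{KERN} converges (absolutely, in fact, on the diagonal, being a sum of nonnegative terms) and genuinely computes $K^{F_i}_t(x,x)$, so that the termwise estimate transfers to the kernels; this is precisely the role played by the "sufficiently fast decay" hypothesis, and it is the only place where anything beyond monotonicity of $F\mapsto F(t\sqrt{\lambda_l})$ is used.
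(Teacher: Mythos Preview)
Your proof is correct and is essentially the same as the paper's: the paper writes $F_{2}=F_{1}+H$ with $H\geq 0$ and observes that $K^{H}_{t}(x,x)=\sum_{l}H(t\sqrt{\lambda_l})|u_l(x)|^{2}\geq 0$, which is exactly your termwise comparison rephrased. If anything, you are more careful than the paper about the convergence issue, which the paper leaves implicit.
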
 
\begin{proof}
Assume that $0\leq F_{1}\leq F_{2}$ and that both of them are bounded and have bounded supports .  Clearly, $F_{2}=F_{1}+H$, where $H$ is not negative. By (\ref{KERN}) we have 
$$
K^{F_{2}}_{t}(x,x)=K^{F_{1}}_{t}(x,x)+K^{H}_{t}(x,x)
$$
where each term is non-negative. The lemma is proven.

\end{proof}

We are going to make use of the heat kernel 
$$
p_{t}(x,y)=\sum_{l=0} ^{\infty}e^{-t \lambda_{l}}u_{l}(x)\overline{u_{l}(y)},
$$ 
which is associated with the heat semigroup $e^{-t\Delta}$ generated by the self-adjoint operator $\Delta$:
$$
e^{-t\Delta}f(x)=\int_{{\bf M}}p_{t}(x,y)f(y)dy.
$$
Note, that in notations (\ref{func-calc-2}), (\ref{KERN}) 
$$
p_{t}(x,y)=K_{t}^{F}(x,y),\>\>\>F(\lambda)=e^{-\lambda^{2}},\>\>\>e^{-t\Delta}=F(t\sqrt{\Delta}).
$$
It is well known that in the case of a compact Riemannian manifold this kernel obeys the following short-time Gaussian estimates:
\begin{equation}\label{heatkern}
 C_{1} t^{-d/2}e^{-c_{1}\frac{ \left(dist(x,y)\right)^{2}}{t}}\leq p_{t}(x,y)\leq 
 C_{2} t^{-d/2}e^{-c_{2}\frac{ \left(dist(x,y)\right)^{2}}{t}}
\end{equation}
where $0<t<1$,  $\>d=dim\>{\bf M}$ and every constant depends on ${\bf M}$.

\subsection{The lower estimate}

We now sketch the proof of the opposite estimate by comparing $\left|M_{\omega^{-1/2}}\right|$ to the number of eigenvalues (counted with multiplicities) in the interval $[0,\>\omega]$.

Inequalities (\ref{2balls})  in conjunction with (\ref{heatkern}) 
it gives for $0<t<1$
\begin{equation}\label{dest}
a_{1}C_{1}|B(x, t^{-1/2})|\leq p_{t}(x,x)=\sum_{l=0}^{\infty}e^{-t\lambda_{l}^{2}}|u_{l}(x)|^{2}\leq a_{2}C_{2}|B(x, t^{-1/2})|.
\end{equation}

\begin{lem}\label{key}
There exist   constants $ A_{1}=  A_{1}({\bf M}) >0,\>\>\> A_{2}=  A_{2}({\bf M}) >0$ such that for all sufficiently large $s>0$ 

\begin{equation}\label{double}
\frac{A_{1}}{|B(x, s^{-1})|}\leq \sum_{l,\>\lambda_{l}\leq s}|u_{l}(x)|^{2}\leq \frac{A_{2}}{|B(x, s^{-1})|}.
\end{equation}

\end{lem}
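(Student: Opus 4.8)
The strategy is to extract the sharply truncated spectral function $N(s,x):=\sum_{l:\,\lambda_l\le s}|u_l(x)|^2$ from the smoothly truncated sum in (\ref{dest}) by taking the parameter $t$ comparable to $s^{-2}$, so that the weight $e^{-t\lambda_l^2}$ stays bounded below on the range $\lambda_l\le s$ and decays fast beyond it. Observe first that for $t=\theta s^{-2}$ with $\theta$ a fixed constant the right-hand side of (\ref{dest}) is $a_2C_2\,|B(x,t^{-1/2})|\asymp t^{-d/2}\asymp s^{d}$, while by (\ref{balls}) the genuine small ball satisfies $|B(x,s^{-1})|\asymp s^{-d}$; thus both sides of (\ref{double}) are of order $s^{d}$, and it is exactly this order the argument must produce.

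For the upper bound I would use that $1\le e^{ts^2}e^{-t\lambda_l^2}$ whenever $\lambda_l\le s$, so that for any $t\in(0,1)$
\[
N(s,x)=\sum_{\lambda_l\le s}|u_l(x)|^2\le e^{ts^2}\sum_{l=0}^{\infty}e^{-t\lambda_l^2}|u_l(x)|^2\le e^{ts^2}\,a_2C_2\,|B(x,t^{-1/2})|
\]
by the upper estimate in (\ref{dest}); taking $t=s^{-2}$ (which is $<1$ once $s>1$) yields the right-hand inequality in (\ref{double}) with an explicit $A_2=A_2({\bf M})$. (Alternatively one may route the upper bound through Lemma \ref{order}, sandwiching the sharp cut-off by smooth kernels, but the estimate above is direct.) For the lower bound I would split the full sum in (\ref{dest}) into its low part, on which $e^{-t\lambda_l^2}\le 1$, and its tail, obtaining
\[
a_1C_1\,|B(x,t^{-1/2})|\le\sum_{l=0}^{\infty}e^{-t\lambda_l^2}|u_l(x)|^2\le N(s,x)+T(t,s,x),\qquad T(t,s,x):=\sum_{\lambda_l>s}e^{-t\lambda_l^2}|u_l(x)|^2.
\]

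The heart of the matter is to show that for a suitable $t$ the tail $T(t,s,x)$ is strictly smaller than a fixed fraction of $a_1C_1\,|B(x,t^{-1/2})|$. The crude estimate $T\le e^{-ts^2}\sum_l|u_l(x)|^2$ is useless, since $\sum_l|u_l(x)|^2$ diverges; the remedy is to keep half of the Gaussian weight for convergence and spend the other half on decay, writing $e^{-t\lambda_l^2}=e^{-\frac t2\lambda_l^2}e^{-\frac t2\lambda_l^2}\le e^{-\frac t2 s^2}e^{-\frac t2\lambda_l^2}$ for $\lambda_l>s$, so that by (\ref{dest}) applied at time $t/2$
\[
T(t,s,x)\le e^{-\frac t2 s^2}\sum_{l=0}^{\infty}e^{-\frac t2\lambda_l^2}|u_l(x)|^2\le e^{-\frac t2 s^2}\,a_2C_2\,|B(x,(t/2)^{-1/2})|.
\]
Then, setting $t=\theta s^{-2}$ with $\theta=\theta({\bf M})$ still free, one has $e^{-\frac t2 s^2}=e^{-\theta/2}$ and $(t/2)^{-1/2}=\sqrt2\,t^{-1/2}$, so $|B(x,(t/2)^{-1/2})|\le\frac{a_2}{a_1}2^{d/2}|B(x,t^{-1/2})|$ by (\ref{balls}); combining with the split above gives
\[
N(s,x)\ge\bigl(a_1C_1-c_0\,e^{-\theta/2}\bigr)\,|B(x,t^{-1/2})|,\qquad c_0=c_0({\bf M}).
\]
Choosing $\theta$ large enough that $c_0\,e^{-\theta/2}\le\tfrac12 a_1C_1$, we obtain $N(s,x)\ge\tfrac12 a_1C_1\,|B(x,t^{-1/2})|\asymp s^{d}\asymp 1/|B(x,s^{-1})|$ for all $s>\sqrt\theta$, i.e. the left-hand inequality in (\ref{double}) with an explicit $A_1=A_1({\bf M})$. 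The only genuinely delicate point is this tail bound --- recognizing that the naive exponential estimate fails and that splitting the exponent produces a decay factor $e^{-\theta/2}$ which, after enlarging the fixed constant $\theta$, beats $a_1C_1$; the remainder is the routine ball-volume arithmetic already furnished by (\ref{balls})--(\ref{doubling}) together with the two-sided bound (\ref{dest}).
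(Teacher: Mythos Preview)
Your argument is correct, and your upper bound is exactly the paper's (the inequality $\chi_{[0,s]}(\lambda)\le e\,e^{-s^{-2}\lambda^2}$ used there is your estimate with $t=s^{-2}$). For the lower bound you take a genuinely different and cleaner route. The paper decomposes the tail dyadically into shells $[2^{j}s,\,2^{j+1}s]$, bounds each shell by the already-proven upper estimate for $N(2^{j+1}s,x)$, sets $t=2^{2m}s^{-2}$, and then must sum the resulting series $\sum_{j}e^{-2^{2j}2^{2m}}2^{jd}$ and choose $m$ large to make it small. Your trick---writing $e^{-t\lambda^2}\le e^{-ts^2/2}e^{-(t/2)\lambda^2}$ on the tail and invoking the heat-kernel upper bound once at time $t/2$---replaces that entire dyadic summation by a single estimate, and the free parameter $\theta$ in $t=\theta s^{-2}$ plays the same role as the paper's $m$. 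What the paper's approach buys is that it never needs the heat-kernel bound at a second time scale (it recycles the upper half of the lemma instead), which can matter in settings where only a one-sided or single-scale kernel bound is available; what your approach buys is brevity and transparency, since the only ``delicate'' step is the exponent split you already flagged. Both require the same restriction to large $s$ (your $s>\sqrt{\theta}$ versus the paper's choice of large $m$) to keep $t$ in the range where the Gaussian bounds (\ref{heatkern}) apply.
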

\begin{proof}

First, we note that using the right-hand side of  (\ref{dest}), Lemma \ref{order} and the inequality
$$
\chi_{[0,\>s]}(\lambda)\leq ee^{-s^{-2}\lambda^{2}}
$$ 
we obtain
\begin{equation}\label{kern-2}
\sum_{l,\>\lambda_{l}\leq s}|u_{l}(x)|^{2}\leq   e\sum_{l,\>\lambda_{l}\leq s}  e^{-s^{-2}\lambda_{l}^{2}}|u_{l}(x)|^{2}\leq 
$$
$$
e\sum_{l\in \mathbb{N}}  e^{-s^{-2}\lambda_{l}^{2}}|u_{l}(x)|^{2}=ep_{s^{-2}}(x,x)\leq \frac{A_{2}}{|B(x, s^{-1})|},\>\>\>\>A_{2}=A_{2}({\bf M}).
\end{equation}
To prove the left-had side of   (\ref{double}) consider the inequality 
$$
e^{-t\lambda^{2}}=e^{-t\lambda^{2}}\chi_{[0, \>s]}+\sum_{j\geq 0}\chi_{[2^{j}s,\>2^{j+1}s]}(\lambda)e^{-t\lambda^{2}}\leq 
$$
$$
\chi_{[0, \>s]}+\sum_{j\geq 0}\chi_{[0,\>2^{j+1}s]}(\lambda)e^{ -t2^{2j}s^{2}  },
$$
which implies 
\begin{equation}\label{b-h-k}
 p_{t}(x,x)\leq K^{\chi_{[0,  s]}}_{1}(x,x)+\sum_{j>0}e^{ -t2^{2j}s^{2}  }K^{j}_{1}(x,x),
\end{equation}
where  
$$
K^{\chi_{[0,  s]}}_{1}(x, x)=\sum_{l,\>\lambda_{l}\leq s}|u_{l}(x)|^{2},
$$
 $K^{\chi_{[0,  s]}}_{1}(x, y)$ being the kernel of the operator $\chi_{[0,  s]}\left(\sqrt{\Delta}\right)$ and 
 $$
 K^{j}_{1}(x,x)=\sum_{l,\>\lambda_{l}\leq 2^{j+1}s}|u_{l}(x)|^{2},
 $$
  $ K^{j}_{1}(x, y)$       being the kernel of the operator $\chi_{[0,\>2^{j+1}s]}(\sqrt{\Delta})$.
  In conjunction   with (\ref{heatkern}) it gives 
\begin{equation}\label{b-h-k-10}
c_{3}|B(x, t^{-1/2})|\leq p_{t}(x,x)\leq K^{\chi_{[0,  s]}}_{1}(x,x)+\sum_{j>0}e^{ -t2^{2j}s^{2}  }K^{j}_{1}(x,x)=
$$
$$
\sum_{l,\>\lambda_{l}\leq s}|u_{l}(x)|^{2}+\sum_{j>0}e^{ -2^{2j}ts^{2}  }\sum_{l,\>\lambda_{l}\leq 2^{j+1}s}|u_{l}(x)|^{2}.
\end{equation}
 Note, that according to (\ref{doubling})  if $\rho>1$ and $\rho s^{-1}$ is sufficiently small then 
\begin{equation}\label{D-cond}
|B(x, \rho s^{-1}|\leq c\rho^{d}|B(x, s^{-1})|,\>\>\> d=dim\>{\bf M}.
\end{equation}
Next, given $s\geq 1$ and $m\in \mathbb{N}$ we pick $t$ such that 
\begin{equation}\label{cond}
s\sqrt{t}=2^{m}.
\end{equation}
The  inequality (\ref{D-cond})  and the condition (\ref{cond}) imply 
\begin{equation}\label{D1}
\frac{(c2^{md})^{-1}}{|B(x, s^{-1})|}\leq \frac{1}{|B(x, 2^{m}s^{-1})|}\leq c_{1}|B(x, t^{-1/2})|, \>\>\>m\in \mathbb{N}, 
\end{equation}
and
\begin{equation}\label{D2}
\frac{1}{|B(x, 2^{-m-1}s^{-1})|}\leq \frac{c2^{(m+1)d}}{|B(x, s^{-1})|},\>\>\>d=dim\>{\bf M}.
\end{equation}
Thus according to (\ref{D1}),           (\ref{kern-2}) and (\ref{b-h-k-10})                       we obtain that for a certain constant $c_{2}=c_{2}({\bf M})$ 
\begin{equation}\label{Eq}
\frac{(c2^{md})^{-1}}  {|B(x, s^{-1})|}\leq c_{1}|B(x, t^{-1/2})|\leq
$$
$$
c_{2}\left( \sum_{l,\>\lambda_{l}\leq s}|u_{l}(x)|^{2}+  \sum_{j>0}e^{ -2^{2j}ts^{2}  }\sum_{l,\>\lambda_{l}\leq 2^{j+1}s}|u_{l}(x)|^{2}\right)\leq
$$
$$
c_{2}\left( \sum_{l,\>\lambda_{l}\leq s}|u_{l}(x)|^{2}+  \sum_{j>0}\frac{ e^{ -2^{2j}ts^{2}  }}{|B(x, 2^{-j-1}s^{-1})|}\right).
\end{equation}
Using  (\ref{Eq}), (\ref{cond}),  and (\ref{D2}) we obtain that for a certain constant $a=a({\bf M})$ 
$$
\frac{(c2^{md})^{-1}}  {|B(x, s^{-1})|}\leq
a\left( \sum_{l,\>\lambda_{l}\leq s}|u_{l}(x)|^{2}+    \sum_{j>0}\frac {  e^{ -2^{2j}2^{2m } } 2^{(j+1)d} }{|B(x, s^{-1})|}\right)\leq
$$
$$
c_{2}\left( \sum_{l,\>\lambda_{l}\leq s}|u_{l}(x)|^{2}+   \frac{2^{d}}  {|B(x, s^{-1})|} \sum_{j>0}  e^{ -2^{2j}2^{2m } } 2^{jd} \right).
$$
Since
$$
 \frac{2^{d}}  {|B(x, s^{-1})|} \sum_{j>0}  e^{ -2^{2j}2^{2m } } 2^{jd}\leq  \frac{2^{d}2^{-md}}  {|B(x, s^{-1})|} \sum_{j>0}  e^{ -2^{2j}2^{2m } } 2^{(j+m)d}\leq 
 $$
 $$
  \frac{2^{d}2^{-md}}  {|B(x, s^{-1})|} \sum_{j>0}  e^{ -2^{2(j+m) } } 2^{(j+m)d}\leq   \frac{2^{d}2^{-md}}  {|B(x, s^{-1})|} \sum_{j>m}  e^{ -2^{2j } } 2^{jd},
 $$
one has that there are positive constants $c_{3},\>c_{4}$ such that for all  sufficiently large $s$ and $m\in \mathbb{N}$
$$
 \frac{2^{-md}}  {|B(x, s^{-1})|} \left( c_{3}-c_{4}2^{d}\sum_{j\geq m}  e^{ -2^{2j}2^{j d} } \right)\leq  \sum_{l,\>\lambda_{l}\leq s}|u_{l}(x)|^{2},
$$
where expression in parentheses is positive for sufficiently large $m\in \mathbb{N}$.
It proves  the left-had side of   (\ref{double}).

\end{proof}
We apply this lemma when  $t =\omega$ to  obtain the following inequality for sufficiently large  $\omega$: 
\begin{equation}\label{ball}
\frac{1}{|B(x, \omega^{-1/2})|}\leq c_{5}p_{\omega}(x,x)\leq c_{6}\sum_{l,\>\lambda_{l}\leq \omega}|u_{l}(x)|^{2}.
\end{equation}
 One has
 $$
|M_{\omega^{-1/2}}|=\sum_{x_{j}\in M_{\omega^{-1/2}}}\frac{|B(x_{j}, \omega^{-1/2})|}{|B(x_{j}, \omega^{-1/2})|}=\sum_{x_{j}\in M_{\omega^{-1/2}}}\frac{1}{|B(x_{j}, \omega^{-1/2})|}
\int_{B(x_{j}, \omega^{-1/2})}dx,
$$
and thanks to  (\ref{2balls})  we also have
$$
\frac{1}{|B(x_{j}, \omega^{-1/2})|}
\int_{B(x_{j}, \omega^{-1/2})}dx\leq \frac{a_{2}}{a_{1}}\int_{B(x_{j}, \omega^{-1/2})}\frac{dx}{|B(x, \omega^{-1/2})|}.
$$
Now, the inequality (\ref{ball}) shows  that for every sufficiently large $\omega>0$ and every $\omega^{-1/2}$-lattice $M_{\omega^{-1/2}}$ the following inequalities hold true
$$
|M_{\omega^{-1/2}}|\leq \frac{a_{2}}{a_{1}}\sum_{x_{j}\in M_{\omega^{-1/2}}}\int_{B(x_{j}, \omega^{-1/2})}\frac{dx}{|B(x, \omega^{-1/2})|}\leq 
$$
$$
\frac{a_{2}}{a_{1}}\int_{{\bf M}}\frac{dx}{|B(x, \omega^{-1/2})|}\leq \frac{a_{2}}{a_{1}}c_{6}\int_{{\bf M}}\left(\sum_{l,\>\lambda_{l}\leq\omega}|u_{l}(x)|^{2}\right)dx.
$$
Since
\begin{equation}\label{N}
\int_{{\bf M}}\left(\sum_{l,\>\lambda_{l}\leq \omega}|u_{l}(x)|^{2}\right)dx=\sum_{l,\>\lambda_{l}\leq  \omega}\int_{{\bf M}}|u_{l}(x)|^{2}dx=\mathcal{N}_{ \omega}.
\end{equation}
we receive  the inequality 
$$
\left|M_{\omega^{-1/2}}\right|\leq c_{7}\mathcal{N}_{ \omega},
$$
for a certain $c_{7}=c_{7}({\bf M})>0$. 
Thus
there exists an $a=a({\bf M})>0$ such that 
\begin{equation}\label{lower}
a\>\sup|M_{\omega^{-1/2}}|\leq \mathcal{N}_{\omega},
\end{equation}
where $\left|M_{\omega^{-1/2}}\right|$ is the number of points in a lattice $M_{\omega^{-1/2}}$ and $\sup$ is taken over all such lattices. The inequalities (\ref{upper}) and (\ref{lower}) prove Theorem \ref{WWL}.

\end{document}